%
%
%
%
%
%
\documentclass{svjour2}                    
%
\usepackage{graphicx}
%
%
%
%
%

\usepackage[T1]{fontenc}
\usepackage{amsmath}
\usepackage{amssymb}

\def\E {\mathbb{E}}

\def\R {\mathbb{R}}
\def\N {\mathbb{N}}
\def\I {\mathbb{I}}

\def\CG {\mathcal{G}}
\def\CP {\mathcal{P}}

\def\CF {\mathcal{F}}
\def\CM {\mathcal{M}}
\def\CL {\mathcal{L}}
\def\CB {\mathcal{B}}

\def\CI {\mathcal{I}}

\def\vphi {\varphi}
\def\veps {\varepsilon}
\def\lm {\lambda}
\def\o {\omega}
\def\O {\Omega}

\def\cq {\hat{q}}

\def\co {\hat{\omega}}
\def\cO {\widehat{\Omega}}
\def\cF {\widehat{\CF}}
\def\cP {\widehat{\CP}}
\def\cW {\widehat{W}}
\def\cY {\widehat{Y}}
\def\cZ {\widehat{Z}}
\def\cM {\widehat{M}}
\def\cz {\hat{\zeta}}
\def\cw {\hat{w}}
\def\ci {\hat{i}}

\def\tte {\tilde{\theta}}
\def\ti {\tilde{i}}
\def\tnu {\tilde{\nu}}
\def\tbe {\tilde{\beta}}
\def\tlm {\tilde{\lambda}}
\def\tu {\tilde{u}}
\def\tLm {\tilde{\Lambda}}

\def\na {\frac{1}{n^{\frac{1}{\alpha}}}}

\def\cnu {\hat{\nu}}
\def\cbe {\hat{\beta}}
\def\cte {\hat{\theta}}

\def\cnun {\hat{\nu}^{(n)}}
\def\cten {\hat{\theta}^{(n)}}
\def\cin {\hat{i}^{(n)}}
\def\cben {\hat{\beta}^{(n)}}
\def\clmn {\hat{\lambda}^{(n)}}
\def\cLmn {\widehat{\Lambda}^{(n)}}
\def\cun {\hat{u}^{(n)}}

\def\ctei {\hat{\theta}^{(\infty)}}
\def\cii {\hat{i}^{(\infty)}}
\def\cbei {\hat{\beta}^{(\infty)}}
\def\cLmi {\widehat{\Lambda}^{(\infty)}}

\def\mbar {\overline{m}}

\begin{document}

\title{Complete characterization of convergence to equilibrium for an inelastic Kac model}



\author{Ester Gabetta         \and
        Eugenio Regazzini 
}


\institute{E. Gabetta \at
              Dipartimento di Matematica, Università degli Studi di Pavia, 27100 Pavia, Italy \\
              Tel.: +39.0382.985650\\
              Fax: +39.0382.985602\\
              \email{ester.gabetta@unipv.it}           
           \and
           E. Regazzini \at
              Dipartimento di Matematica, Università degli Studi di Pavia, 27100 Pavia, Italy \\
              Tel.: +39.0382.985622\\
              Fax: +39.0382.985602\\
              \email{eugenio.regazzini@unipv.it}\\
              Also affiliated to CNR-IMATI (Milano)
}
\date{Received: date / Accepted: date}

\maketitle

\begin{abstract}
Pulvirenti and Toscani introduced an equation which extends the Kac caricature of a Maxwellian gas to inelastic particles. We show that the probability distribution, solution of the relative Cauchy problem, converges weakly to a probability distribution if and only if the symmetrized initial distribution belongs to the standard domain of attraction of a symmetric stable law, whose index $\alpha$ is determined by the so-called degree of inelasticity, $p>0$, of the particles: $\alpha=\frac{2}{1+p}$. This result is then used: (1) To state that the class of all stationary solutions coincides with that of all symmetric stable laws with index $\alpha$. (2) To determine the solution of a well-known stochastic functional equation in the absence of extra-conditions usually adopted.

\keywords{Central limit theorem \and convergence to equilibrium \and inelastic Kac equation \and stable law \and standard domain of attraction.}
\end{abstract}

\section{Introduction and formulation of the new results}
\label{sec:1}

This work deals with a model introduced in \cite{PulvirentiToscani} to provide an extension to inelastic particles of the well-known \textit{Kac caricature} of a Maxwellian gas. It consists of an equation which describes the evolution of the probability distribution (p.d., for short) $\mu(\cdot,t)$ of the velocity of a particle on the real line. The model can be formulated, in terms of the Fourier-Stieltjes transform $\varphi(\cdot,t)$ of $\mu(\cdot,t)$, as follows

\begin{equation}\label{eqPhi}
\frac{\partial}{\partial t}\varphi(\xi,t)=\frac{1}{2\pi}\int_{0}^{2\pi}\varphi(\xi c_p(\theta),t)\varphi(\xi s_p(\theta),t)d\theta-\varphi(\xi,t)\quad (\xi\in\R,t>0)
\end{equation}
where $p\geq0$ is a parameter, and

\[
c_p(\theta):=\cos\theta|\cos\theta|^p, \quad s_p(\theta):=\sin\theta|\sin\theta|^p\qquad (\theta\in (0,2\pi]).
\]
The parameter $p$ can be interpreted as \textit{degree of inelasticity}. The case of perfect elasticity corresponds to $p=0$, when $(\ref{eqPhi})$ coincides with the Kac equation. Motivations of a physical nature for the analysis of dissipative systems ($p>0$) can be found in Section 1 of \cite{PulvirentiToscani} and in some of the references quoted therein. See also reviews in \cite{Villani2002} and \cite{Villani2006}. Recently, one-dimensional extensions of $(\ref{eqPhi})$ have been proposed and studied in \cite{BassettiLadelli}, \cite{BassettiLadelliMatthes}, \cite{BobylevCercignaniGamba}, and have been reviewed in \cite{BassettiGabetta}. According to these extensions, the random vector $(c_p(\tilde{\theta}), s_p(\tilde{\theta}))$, with $\tilde{\theta}$ uniformly distributed on $(0,2\pi]$, is replaced by some more general random vector $(L,R)$ whose p.d. $m$ satisfies $\int_{\R^2}(|x|^a+|y|^a)m(dxdy)=1$ for some $a$ in $(0,2]$.

Turning back to $(\ref{eqPhi})$, it is a well-known fact that the Cauchy problem obtained by combining $(\ref{eqPhi})$ with the initial condition

\begin{equation}\label{condIniz}
\varphi(\xi,0)=\varphi_0(\xi) \qquad (\xi\in\R),
\end{equation}
where $\varphi_0$ denotes the Fourier-Stieltjes transform of an initial p.d. $\mu_0$, admits one and only one solution in the class of all one-dimensional \textit{characteristic functions} (c.f.). Recall that, given any p.d. $m$ on the Borel class of $\R^d$, $\CB(\R^d)$, the corresponding c.f. is defined by $\hat{m}(\xi):=\int_{\R^d}e^{i(\xi,x)}m(dx)$ for every $\xi$ in $\R^d$, which is the same as the notion of Fourier-Stieltjes transform of $m$, adopted in the present paper. According to the tradition of the kinetic models, the long-time behaviour of the solution of $(\ref{eqPhi})$ has been taken into consideration with a view to the following problems:

\begin{itemize}
\item[(I)] To find all the stationary solutions (equilibria).
\item[(II)] To provide conditions on $\mu_0$ in order that $\mu(\cdot,t)$ converge weakly to some probabilistic distribution $\mu_\infty$, as $t$ goes to infinity.
\item[(III)] To evaluate both the rate of convergence and the error in approximating for fixed $t$.
\end{itemize}

As to (I), it has been proved, in \cite{PulvirentiToscani}, that symmetric stable p.d.'s $g_\alpha$, with c.f.

\begin{equation}\label{cfStable}
\hat{g}_\alpha(\xi)=e^{-a_0|\xi|^\alpha}\qquad (\xi\in\R)
\end{equation}
for any $a_0$ in $\R^+:=[0,+\infty)$ and

\begin{equation}\label{alpha}
\alpha:=\frac{2}{1+p}
\end{equation}\newline
are stationary solutions for $(\ref{eqPhi})$. As far as problem (II) is concerned, recall that a sequence of p.d.'s $m_n$ on the Borel class $\CB(M)$ of a metric space $M$ is said to be weakly convergent to a p.d. $m$ (in symbols: $m_n\Rightarrow m$) if $\int_{M}h dm_n\rightarrow \int_{M}h dm$ as $n\rightarrow +\infty$, for every bounded and continuous function $h\colon M\rightarrow\R$. Now, let $F_0$ indicate the p.d. function associated with the initial datum $\mu_0$, $-$ $F_0(x):=\mu_0((-\infty,x])$ for every $x$ in $\R$ $-$ and let $F^*_0$ denote the symmetrized p.d. function determinated by

\begin{equation}\label{simm}
F^*_0(x):=\frac{1}{2}[F_0(x)+1-F_0(-x)]
\end{equation}
at each $x$ such that $(-x)$ is a continuity point for $F_0$. In this notation a partial solution to problem (II) has been proved in \cite{BassettiLadelliRegazzini}: \textit{If $p>0$ and}

\begin{equation}\label{cond}
\lim_{x\rightarrow+\infty}x^\alpha[1-F^*_0(x)]=c_0
\end{equation}
\textit{for some $c_0$ in $\R^+$, then $\mu(\cdot,t)\Rightarrow g_\alpha$ as $t\rightarrow+\infty$, with}

\begin{equation}\label{a_0}
a_0=2 c_0\lim_{T\rightarrow+\infty}\int_{0}^{T}\frac{\sin x}{x^\alpha}dx.
\end{equation}\newline
Both in \cite{BassettiLadelliRegazzini} and in \cite{PulvirentiToscani}, answers to problem (III) have been given with respect to certain weak metrics in the set of all p.d.'s on $\CB(\R)$.

As to the present work, its main goal is that of checking whether $(\ref{cond})$, besides being sufficient, is also necessary in order that the solution to $(\ref{eqPhi})$-$(\ref{condIniz})$ may converge weakly. When $p=0$ (Kac equation), complete solutions to (I)-(II) can be found in \cite{CarlenGabettaRegazzini2008} and \cite{GabettaRegazzini2008}: \textit{The class of all stationary solutions of the Kac equation is the same as that of all weak limits and coincides with the set of all Gaussian laws with $0$ mean; moreover, convergence to equilibrium happens if and only if the variance of the initial datum is finite.} The most important result in the present paper is of the same nature and provides a complete answer to (II) in the case of $p>0$ ($0<\alpha<2$). A partial response is given by Theorem 2.1 in \cite{BassettiLadelliRegazzini}: \textit{If $\mu(\cdot,t)$ converges weakly, then}
\[
\lim_{\xi\rightarrow+\infty}\inf_{x\geq\xi}x^\alpha[1-F^*_0(x)]<+\infty.
\]
This statement is now completed and improved by

\begin{theorem}\label{teorema1}
Let $0<\alpha<2$. Then, in order that the solution $\mu(\cdot,t)$ of $(\ref{eqPhi})$-$(\ref{condIniz})$ converge weakly to a p.d. $\mu_\infty$ on $\CB(\R)$ it is necessary and sufficient that $(\ref{cond})$ holds true. If this is so, $\hat{\mu}_\infty=\hat{g}_\alpha$ with $a_0$ given by $(\ref{a_0})$. So, the limit $\mu_\infty$ degenerates \emph{(}in the sense that $\mu_\infty$ is the point mass $\delta_{x_0}$ for some $x_0$\emph{)} if and only if $a_0=0$ and, therefore, $\mu_\infty=\delta_0$.
\end{theorem}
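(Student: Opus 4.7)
Sufficiency of (\ref{cond}) is the result already quoted from \cite{BassettiLadelliRegazzini}, so the task is to prove necessity and to identify the limit. The plan is to combine the probabilistic Wild--McKean representation of the solution with the elementary identity
\[
|c_p(\theta)|^{\alpha}+|s_p(\theta)|^{\alpha}=\cos^2\theta+\sin^2\theta=1,
\]
which holds precisely because $\alpha(1+p)=2$, and then to invoke the converse of the classical stable limit theorem for triangular arrays.

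\textbf{Main steps.} First I would write
\[
\varphi(\xi,t)=e^{-t}\sum_{n\geq 0}(1-e^{-t})^n \hat q_n(\xi),\qquad \hat q_n(\xi)=\E\Bigl[\prod_{j=1}^{n+1}\varphi_0(\xi\hat\beta^{(n)}_j)\Bigr],
\]
where the random weights $\hat\beta^{(n)}_j$ are products of $c_p$ or $s_p$ evaluated at i.i.d.\ uniform angles along the root-to-leaf paths of a binary McKean tree with $n+1$ leaves. The algebraic identity above implies that $M_n:=\sum_j|\hat\beta^{(n)}_j|^\alpha$ is a mean-one nonnegative martingale with an a.s.\ limit $W_\infty$, while $\max_j|\hat\beta^{(n)}_j|\to 0$ in probability, so the array is infinitesimal in the classical sense. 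Assuming $\mu(\cdot,t)\Rightarrow\mu_\infty$, I would then choose $t_n$ so that the geometric mass $e^{-t_n}(1-e^{-t_n})^n$ concentrates around a given index $n$ and deduce, via a Tauberian step applied to this mixture, that $\hat q_n(\xi)\to\hat\mu_\infty(\xi)$ along a suitable subsequence. Conditionally on the weights, $\hat q_n(\xi)$ is the characteristic function of the weighted sum $\sum_j\hat\beta^{(n)}_j X_j$ with $X_j$ i.i.d.\ of law $\mu_0$ and independent of the weights. The infinitesimality of $\{\hat\beta^{(n)}_j\}$ together with convergence of these sums puts me in the setting of the converse direction of Gnedenko's theorem, which forces $F_0^*$ to belong to the standard domain of attraction of a symmetric $\alpha$-stable law; that is, (\ref{cond}) must hold. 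A final bookkeeping step matches the coefficient of the limiting c.f.\ with the formula (\ref{a_0}), and shows that degeneracy $\mu_\infty=\delta_{x_0}$ can only arise when $c_0=a_0=0$, whence $x_0=0$ by the symmetry already built into the stationary family.

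\textbf{Main obstacle.} The principal technical difficulty is that the classical Gnedenko converse is stated for deterministic triangular arrays, whereas here the array $\{\hat\beta^{(n)}_j\}$ is random, and its randomness does not wash out in the limit (because $W_\infty$ is a genuinely nondegenerate variable). One either conditions on the $\sigma$-algebra generated by the weights and combines a null-set argument with the a.s.\ positivity of $W_\infty$, or passes to an infinite-tree limit and employs a LePage-type representation to read off the tail of $F_0^*$ directly from the conditional characteristic function of the limit. Controlling the Tauberian passage from a limit in $t$ to a limit in $n$, while tracking the analytic constants accurately enough to recover precisely (\ref{a_0}), is where the bulk of the labour lies.
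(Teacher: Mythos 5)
Your overall framework (Wild--McKean representation, infinitesimal arrays, converse of the stable limit theorem) is the same as the paper's, but the two steps on which everything hinges are exactly the ones you leave unresolved, and one of them is asserted incorrectly. First, the ``Tauberian step'': the geometric weights $e^{-t}(1-e^{-t})^{n-1}$ do \emph{not} concentrate around any index as $t\to\infty$ --- they spread over a window of width of order $e^{t}$ --- and Abel-type summability of the bounded sequence $(\hat q_n(\xi))_{n}$ does not imply its convergence (think of $(-1)^n$). The paper never claims $\hat q_n\to\hat\mu_\infty$; instead it packages the whole array (weights, component laws, their convolution, the uniform-negligibility indices) into a single random element $W$ of a compact metric space, extracts a weakly convergent subsequence of $\mathcal{P}_{t_n}W^{-1}$, and applies Skorokhod's theorem to obtain almost surely convergent versions. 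Second, and more seriously, the passage from convergence of the \emph{random}-weighted sums to condition $(\ref{cond})$ is the crux, and your two suggested fixes (conditioning on the weights, or a LePage representation) are not developed and are not what makes the argument close. What the paper actually does is: (a) from the $\omega$-wise central limit problem it extracts only the bound $\limsup_{x\to+\infty} x^\alpha(1-F^*_0(x))<+\infty$; (b) it observes that the \emph{deterministic} equal-weight vectors $(r^{-1/\alpha},\dots,r^{-1/\alpha})$, for $r=2^m+2k$, lie in the support of the law of the weight vector, via symmetric McKean trees; (c) it proves, by a customized Skorokhod construction, that all subsequential limits in law of sums $\sum_{l} b_{l,r}\zeta_l$ with weights in that support must coincide; (d) it proves tightness of the laws of $n^{-1/\alpha}\sum_{i\le n}\zeta_i$ by a Doob-type inequality resting on the bound from (a). Only after (b)--(d) does the classical i.i.d.\ theory of standard domains of attraction deliver $(\ref{cond})$. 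None of (b)--(d) appears in your proposal, so the necessity half is not actually proved.

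A smaller but telling slip: for this model $\sum_j|\hat\beta^{(n)}_j|^\alpha\equiv 1$ \emph{deterministically}, because $|c_p(\theta)|^{\alpha}=\cos^2\theta$, $|s_p(\theta)|^{\alpha}=\sin^2\theta$, and each branching of the tree preserves the $\alpha$-sum exactly; so your martingale $M_n$ is constant and $W_\infty\equiv 1$, not ``a genuinely nondegenerate variable''. The randomness that genuinely obstructs a direct appeal to Gnedenko's converse lies in the individual weights, not in $M_n$, and the paper neutralizes it by the support-plus-invariability argument rather than by conditioning.
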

The proof is deferred to Section 3. It is based on a probabilistic construction $-$ explained in Section 2 $-$ which permits the application of methods of current usage to prove the \textit{central limit theorem}. In fact, also the conclusions parallel the answer to the central limit problem for independent and identically distributed (i.i.d. for short) summands, in the sense that, also in the kinetic problem, the limiting laws are stable, and ``good'' initial data belong to the \textit{standard domain of attraction} of stable laws. See Chapter 2 of \cite{IbragimovLinnik} and Chapter VII of \cite{Loève}.

The following straightforward corollary of the above theorem provides a complete answer to problem (I).

\begin{corollary}\label{corollario1}
For each $\alpha$ in $(0,2]$, the class of all stationary solutions of $(\ref{eqPhi})$-$(\ref{condIniz})$ is the same as that of all symmetric stable laws $g_\alpha$ as $a_0$ varies in $\R^+$.
\end{corollary}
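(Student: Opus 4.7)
The plan is to leverage Theorem \ref{teorema1} together with the uniqueness of the solution of the Cauchy problem (\ref{eqPhi})--(\ref{condIniz}) in the class of c.f.'s, which is recalled in the introduction. The inclusion ``$\supseteq$'' is already available: Pulvirenti and Toscani showed in \cite{PulvirentiToscani} that, for every $a_0 \in \R^+$, the symmetric stable law $g_\alpha$ with c.f. (\ref{cfStable}) solves the stationary equation associated with (\ref{eqPhi}). So only the reverse inclusion ``$\subseteq$'' requires a proof.

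For that, I would let $\mu_\infty$ be an arbitrary stationary solution of (\ref{eqPhi}) and use it as the initial datum, i.e.~set $\mu_0 := \mu_\infty$ in (\ref{condIniz}). By the aforementioned uniqueness, the trajectory of the Cauchy problem is constant in time, $\mu(\cdot,t) = \mu_\infty$ for all $t \geq 0$, and hence trivially $\mu(\cdot,t) \Rightarrow \mu_\infty$ as $t \to +\infty$. For $0 < \alpha < 2$, Theorem \ref{teorema1} immediately forces $\hat{\mu}_\infty = \hat{g}_\alpha$ for some $a_0 \in \R^+$, so $\mu_\infty = g_\alpha$. The degenerate case $a_0 = 0$ gives $\mu_\infty = \delta_0$, which is itself the symmetric $\alpha$-stable law with $a_0 = 0$, so the characterization remains exhaustive.

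The residual case $\alpha = 2$ (i.e.~$p = 0$, the classical Kac equation) is not covered by Theorem \ref{teorema1}, but the same ``freeze the trajectory'' argument applies: the stationary solution $\mu_\infty$, regarded as an initial datum, produces a constant trajectory that converges weakly to itself, and the complete characterization of the long-time behaviour of the Kac equation obtained in \cite{CarlenGabettaRegazzini2008} and \cite{GabettaRegazzini2008} identifies every such limit with a centered Gaussian, i.e.~a symmetric $2$-stable law.

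There is no substantial obstacle here; the corollary is an essentially formal consequence of Theorem \ref{teorema1} and of the uniqueness of the forward evolution. The only point to watch is the degenerate instance of the limit, which must be recognised as (and not excluded from) the family $\{g_\alpha : a_0 \in \R^+\}$.
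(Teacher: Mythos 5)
Your proposal is correct and coincides with the paper's own argument: the paper treats Corollary \ref{corollario1} as immediate and spells out exactly your ``freeze the trajectory'' reasoning in the proof of Corollary \ref{corollario2} (take the stationary c.f.\ as initial datum, invoke uniqueness of the Cauchy problem so the trajectory is constant, then apply Theorem \ref{teorema1}), with the reverse inclusion supplied by the result of Pulvirenti and Toscani. Your explicit handling of the endpoint $\alpha=2$ via the Kac-equation characterization is a welcome touch of care that the paper leaves implicit.
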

In accordance with definition, stationary solutions of $(\ref{eqPhi})$ are c.f.'s $\varphi(\cdot,t)$ such that $\frac{\partial}{\partial t}\varphi(\cdot,t)\equiv 0$ or, equivalently, solutions of the equation

\begin{equation}\label{staz}
\frac{1}{2\pi}\int_{0}^{2\pi}\varphi(\xi c_p(\theta))\varphi(\xi s_p(\theta))d\theta=\varphi(\xi)\qquad(\xi\in\R)
\end{equation}
where $\varphi$ is an unknown c.f.. There is a flourishing literature on stochastic functional equations of the type of $(\ref{staz})$, motivated by interesting problems arising in probability and stochastics. See the recent paper \cite{AlsmeyerMeiners} and references therein. It is worth recalling that studies on equations which generalize $(\ref{staz})$ have been utilized, in \cite{BassettiLadelliMatthes} for example, to characterize limits of solutions of generalized dissipative kinetic models. Here, thanks to the complete solution to (I)-(II) given by Theorem $\ref{teorema1}$, we are in a position to exchange roles, in the sense that we utilize Theorem $\ref{teorema1}$ to characterize the complete solution of $(\ref{staz})$. A suggestion to proceed in this way has been given us by Federico Bassetti.

\begin{corollary}\label{corollario2}
The class of all solutions of $(\ref{staz})$, within the class of all c.f.'s on the real line, consists of all c.f.'s $(\ref{cfStable})$ with $a_0\geq0$, for each $\alpha$ in $(0,2]$.
\end{corollary}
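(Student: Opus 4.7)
The plan is to deduce Corollary~\ref{corollario2} from Theorem~\ref{teorema1}, together with its $p=0$ analogue for the Kac equation. The ``if'' direction --- that each c.f.\ of the form (\ref{cfStable}) with $a_0\geq 0$ solves (\ref{staz}) --- is already recorded in the discussion of problem (I) and comes from \cite{PulvirentiToscani}. The work therefore lies in the ``only if'' direction.

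For that, let $\psi$ be any c.f.\ on $\R$ satisfying (\ref{staz}), and let $\mu_0$ denote the p.d.\ whose c.f.\ is $\psi$. Adopt $\mu_0$ as initial datum in the Cauchy problem (\ref{eqPhi})--(\ref{condIniz}), and consider the constant-in-time function $\varphi(\xi,t):=\psi(\xi)$. Its partial derivative in $t$ vanishes identically, while the right-hand side of (\ref{eqPhi}) evaluated at $\varphi(\cdot,t)\equiv\psi$ equals $\psi(\xi)-\psi(\xi)=0$ by (\ref{staz}); moreover $\varphi(\xi,0)=\psi(\xi)=\varphi_0(\xi)$ by construction. By the uniqueness of the solution of (\ref{eqPhi})--(\ref{condIniz}) within the class of c.f.'s, recalled in the introduction, we conclude $\varphi(\xi,t)\equiv\psi(\xi)$ and hence $\mu(\cdot,t)=\mu_0$ for every $t\geq 0$. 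In particular $\mu(\cdot,t)\Rightarrow\mu_0$ trivially as $t\to+\infty$.

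Now I split on $\alpha$. For $0<\alpha<2$ (equivalently $p>0$), Theorem~\ref{teorema1} applies and forces $\mu_0=g_\alpha$ for some $a_0\geq 0$; that is, $\psi=\hat{g}_\alpha$, as required. For the boundary case $\alpha=2$ (equivalently $p=0$), equation (\ref{eqPhi}) reduces to the Kac equation, and the analogous complete characterization of convergence to equilibrium established in \cite{CarlenGabettaRegazzini2008,GabettaRegazzini2008} --- also recalled in the introduction --- yields the same conclusion, namely that $\mu_0$ is a symmetric Gaussian with zero mean, which is precisely a $g_2$ with c.f.\ of the form (\ref{cfStable}) for a suitable $a_0\geq 0$.

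No substantial obstacle arises, since Theorem~\ref{teorema1} does all the heavy lifting; the only conceptual step is the reduction itself --- namely the observation that any c.f.\ solving the time-independent equation (\ref{staz}) produces, via uniqueness for (\ref{eqPhi})--(\ref{condIniz}), a trivially constant (hence weakly convergent) solution of the evolution problem, to which the convergence-to-equilibrium characterization can then be applied.
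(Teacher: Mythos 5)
Your proposal is correct and follows essentially the same route as the paper: take the c.f.\ solving (\ref{staz}) as initial datum, observe that uniqueness for the Cauchy problem forces the solution to be constant in time (hence trivially weakly convergent), and invoke the convergence-to-equilibrium characterization. In fact you are slightly more careful than the paper's own two-line proof, which simply cites Theorem~\ref{teorema1} even though that theorem covers only $0<\alpha<2$; your explicit appeal to the $p=0$ results of \cite{CarlenGabettaRegazzini2008} and \cite{GabettaRegazzini2008} for the boundary case $\alpha=2$ fills in that omission.
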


\begin{proof}
It is immediate to check that c.f. $(\ref{cfStable})$ satisfies $(\ref{staz})$, for each $\alpha$. Conversely, if $\tilde{\varphi}$ is any c.f. solution of $(\ref{staz})$, taking $\tilde{\varphi}$ as initial datum for $(\ref{eqPhi})$, then $\tilde{\varphi}$ is the solution of the resulting Cauchy problem and, applying Theorem $\ref{teorema1}$, $\tilde{\varphi}$ must be a c.f. like $(\ref{cfStable})$.\qed
\end{proof}
At the best of our knowledge, studies on equation $(\ref{staz})$ hitherto developed assume additional conditions that, in the light of Corollary \ref{corollario2}, appear redundant.

The rest of the present paper is organized in this way. Section 2 includes preliminaries useful to a better understanding of the proof of Theorem $\ref{teorema1}$. Section 3 contains the proof of the theorem, split into four steps.

\section{Preliminaries}
\label{sec:2}

The main feature of our approach is its probabilistic basis inspired by the \textit{Wild representation} of solutions of Boltzmann's-like equations. See \cite{Wild}. In particular, following McKean \cite{McKean}, the Wild representation of the solution of $(\ref{eqPhi})$-$(\ref{condIniz})$ reduces to

\begin{equation}\label{wild}
\vphi(\xi,t)=\sum_{n\geq1}e^{-t}(1-e^{-t})^{n-1}\cq_n(\xi;\vphi_0)\qquad (t\geq0,\xi\in\R)
\end{equation}
with

\begin{equation}\label{cq1}
\cq_1(\cdot,\vphi_0):=\vphi_0(\cdot)
\end{equation}
and, for $n\geq2$,

\begin{equation}\label{cqn}
\cq_n(\cdot;\vphi)=\frac{1}{n-1}\sum_{j=1}^{n-1} \cq_{n-j}(\cdot;\vphi_0)\bullet\cq_j(\cdot;\vphi_0)
\end{equation}
where $\bullet$ denotes an operator called \textit{$p$-Wild convolution}, i.e.

\[
g_1\bullet g_2(\xi):=\frac{1}{2\pi}\int_{0}^{2\pi}g_1(\xi c_p(\theta))g_2(\xi s_p(\theta))d\theta\qquad (\xi\in\R).
\]
Note that the left-hand side of $(\ref{staz})$ is the $p$-Wild convolution of $\vphi$ with iself. So, Corollary $\ref{corollario2}$ states that the class of all fixed points of the $p$-Wild convolution is the same as that of all symmetric stable p.d.'s with index $\alpha=\frac{2}{1+p}$, for each $p\geq0$. It is now important to recall how $(\ref{wild})$-$(\ref{cqn})$ can be used to show that the solution of $(\ref{eqPhi})$-$(\ref{condIniz})$ is the c.f. of a \textit{stochastically weighted sum of real-valued random variables}. For details, cf. \cite{BassettiLadelliMatthes}, \cite{GabettaRegazzini2010} and \cite{McKean}. As it appears in a number of recent papers, this interpretation turns out to be advantageous in the study of convergence to equilibrium of solutions of kinetic equations. It is in fact the key to the applicability of classical powerful methods from the central limit theorem, as already recalled in Section 1. See \cite{BassettiGabetta}, \cite{BassettiLadelli}, \cite{BassettiLadelliMatthes},  \cite{BassettiLadelliRegazzini}, \cite{DoleraGabettaRegazzini}, \cite{DoleraRegazzini2010}, \cite{GabettaRegazzini2006}, \cite{GabettaRegazzini2008}, \cite{GabettaRegazzini2010}, \cite{Regazzini} for one-dimensional models, and \cite{CarlenGabettaRegazzini2007}, \cite{DoleraRegazziniM} for the spatially homogeneous Boltzmann equation.

For each $t>0$, consider a probability space $\Big(\Omega, \CF, \CP_t\Big)$ which supports the following stochastically \textit{independent} random elements:
\begin{itemize}
\item $X=(X_n)_{n\geq1}$ is a sequence of independent and identically distributed (i.i.d.) random elements with common p.d. $\mu_0$ (initial datum).
\item $\tte=(\tte_n)_{n\geq1}$ is a sequence of i.i.d. random numbers uniformly distributed on $(0,2\pi]$.
\item $\ti=(\ti_n)_{n\geq1}$ is a sequence of independent integer-valued random numbers, with $\ti_n$ uniformly distributed on $\{1,\dots,n\}$.
\item $\tnu$ denotes an integer-valued random variable such that $\CP_t\{\tnu=n\}=e^{-t}(1-e^{-t})^{n-1}$ for $n=1,2,\dots\;.$
\end{itemize}

Given these elements, the aforesaid probabilistic interpretation consists in stating that $\cq_n$ is the c.f. of the random number

\begin{equation}\label{Vn}
V^{(n)}:=\sum_{i\geq1}\tbe_{i,n}X_i=(\tbe_n,X)\qquad (n=1,2,\dots)
\end{equation}\newline
where $\tbe_k:=(\tbe_{1,k},\tbe_{2,k},\dots)$ is defined by

\[
\tbe_1:=(1,0,0,\dots)',\qquad \tbe_{k+1}:=A_k(\ti_k)\tbe_k\quad\text{for $k=1,2,\dots$}\;.
\]
$C'$ denotes the transpose of $C$, and $A_k(\ti_k)$ denotes the $\infty^2$ matrix which, for $1<\ti_k<k$, is given by

\[
A_k(\ti_k):=\begin{bmatrix}
						\CI_{\ti_k-1} & O & O & O \\
						O & V_k & O & O \\
						O & O & \CI_{k-\ti_k} & O \\
						O & O & O & O
						\end{bmatrix}
\]
with $\CI_m=$identity matrix, $V_k=(c_p(\tte_k),s_p(\tte_k))'$, while

\[
A_k(\ti_k)=A_k(1):=\begin{bmatrix}
									 V_k & O & O \\
									 O & \CI_{k-1} & O \\
									 O & O & O
									 \end{bmatrix},
									 \qquad
A_k(\ti_k)=A_k(k):=\begin{bmatrix}
									 \CI_{k-1} & O & O \\
									 O & V_k & O \\
									 O & O & O
									 \end{bmatrix}
\]
when $\ti_k=1$ and $\ti_k=k$, respectively. This representation of $\tbe$ is derived from \cite{BassettiLadelliMatthes}. Thanks to $(\ref{Vn})$, one can write

\begin{equation}\label{13}
\begin{split}
\vphi(\xi,t)&=\sum_{n\geq1}\CP_t\{\tnu=n\}\E_t(\exp{\{i\xi(\tbe_n,X)\}}\big|\tnu=n)\\
&=\int_{\Omega}\prod_{k=1}^{\tnu(\omega)}\vphi_0(\tbe_{k,\tnu(\omega)}(\omega)\xi)\CP^{|\CG}_t(d\omega)
\end{split}
\end{equation}
where $\E_t$ denotes the expectation with respect to $\CP_t$, $\CG$ the $\sigma$-algebra generated by $(\tte,\ti,\tnu)$ and $\CP^{|\CG}_t$ the restriction of $\CP_t$ to $\CG$. The latter equality in $(\ref{13})$ can be viewed as a \textit{fibering of the solution of $(\ref{eqPhi})$-$(\ref{condIniz})$ into components which are c.f.'s of weighted sums of i.i.d. random numbers with common p.d. $\mu_0$}.

It is this kind of fibering that paves the way for the study of the problem $(\ref{eqPhi})$-$(\ref{condIniz})$ from the point of view of the central limit problem.

With a view to simplifying computations it is worth recalling that the equality

\begin{equation}\label{req}
\cq_n(\xi;\vphi_0)=\cq_n(\xi;\Re\vphi_0)
\end{equation}
holds for every $n\geq2$ and $\xi$ in $\R$, where $\Re z$ ($\Im z$, respectively) denotes the real (imaginary, respectively) part of the complex number $z$. Then, from $(\ref{wild})$-$(\ref{cqn})$,

\begin{equation}\label{wildRe}
\vphi(\xi,t)=ie^{-t}\Im\vphi_0(\xi)+e^{-t}\sum_{n\geq1}(1-e^{-t})^{n-1}\cq_n(\xi,\Re\vphi_0)
\end{equation}
which is tantamount to saying that the solution to $(\ref{eqPhi})$-$(\ref{condIniz})$ can be viewed as sum of $(ie^{-t}\Im\vphi_0(\xi))$ and the solution to the same Cauchy problem with initial datum $\vphi^*_0:=\Re\vphi_0$. In view of this remark, \textit{we can confine ourselves to considering $(\ref{eqPhi})$-$(\ref{condIniz})$ with a real-valued initial c.f.} $\vphi^*_0$, i.e. with the symmetric p.d. $\mu^*_0$ generated by the p.d. function $F^*_0$ defined in $(\ref{simm})$.

\section{Proof of the theorem}\label{sec:3}
As mentioned in Section 1, the fact that $(\ref{cond})$ is sufficient in order the solution of $(\ref{eqPhi})$-$(\ref{condIniz})$ converge weakly, the limit being a symmetric stable p.d. $\mu_\infty$, is well-known from \cite{BassettiLadelliRegazzini}. Necessity will be proved here by an argument split into a certain number of steps. The basic assumption is that $\mu(\cdot,t)$ converges weakly to some p.d. $\mu_\infty$ as $t$ goes to infinity. The \textit{first step} consists in considering a random array of constituent elements of the central limit problem for sums whose c.f.'s are given by $\xi\mapsto\prod_{k=1}^{\tnu(\omega)}\vphi^*_0(\tbe_{k,\tnu(\omega)}(\omega)\xi)$ for each $\omega$ in $\Omega$ and every $\xi$ in $\R$. In view of the symmetry of $\mu^*_0$, one has $\prod_{k=1}^{\tnu(\omega)}\vphi^*_0(\tbe_{k,\tnu(\omega)}(\omega)\xi)=\prod_{k=1}^{\tnu(\omega)}\vphi^*_0(|\tbe_{k,\tnu(\omega)}(\omega)|\xi)$. The step goes on proving that there is a divergent sequence of instants $t_1<t_2<\dots$ such that the p.d.'s of the above array, under $\CP_{t_1},\CP_{t_2},\dots$ respectively, converge weakly to a p.d.. The argument is based on a technique introduced in \cite{FortiniLadelliRegazzini} and recently utilized in \cite{BassettiLadelliRegazzini}, \cite{DoleraRegazziniM} and \cite{GabettaRegazzini2008} to study convergence to equilibrium in kinetic models. In the \textit{second step}, the aforesaid convergence is combined with the Skorokhod representation theorem to state the existence  of new random arrays which, besides preserving the p.d.'s of the original arrays under the $\CP_{t_n}$'s, converge pointwise. This construction is used to prove the invariability of the limit of specific convergent subsequences of a sequence of the type of $(\ref{Vn})$. The \textit{third step} shows that, in particular, convergent subsequences of $\big(\na\sum_{i=1}^n\zeta_i\big)_{n\geq1}$ belong to the class of the subsequences considered in Step 2, whenever the $\zeta_i$'s are i.i.d. random numbers with common p.d. $\mu^*_0$. In the \textit{fourth step} of the proof, we show that the laws of the random numbers $\na\sum_{i=1}^n\zeta_i$ form a tight class, a fact that, combined with the invariability of the limits of the convergent subsequences, entails convergence of the entire sequence. At this stage, the necessity of $(\ref{cond})$ follows from the central limit theorem for i.i.d. summands.\newline
\newline
\textbf{Step 1} Recall the random elements introduced in Section 2 and, for each $\omega$ in $\Omega$, form the vector
\[
W=W(\omega):=(\tnu(\o),\tte(\o),\ti(\o),\tbe(\o),\tlm(\o),\tLm(\o),\tu(\o))
\]
with:
\begin{itemize}
\item $\tbe=\tbe(\o)=(\tbe_k(\o))_{k\geq1}$.
\item $\tlm=\tlm(\o):=(\tlm_1(\o),\dots,\tlm_{\tnu(\o)}(\o),\delta_0,\delta_0,\dots)$ and, for each $j$ in $\{1,\dots,$ $\tnu(\o)\}$, $\tlm_j(\o)$ is the p.d. determined by the c.f. $\xi\mapsto\vphi^*_0(|\tbe_{j,\tnu(\o)}(\o)|\xi)$, $\xi\in\R$.
\item $\tLm=$convolution of the elements of $\tlm$.
\item $\tu:=(\tu_k)_{k\geq1}$, with $\tu_k=\max_{1\leq j\leq\tnu}\tlm_j([-\frac{1}{k},\frac{1}{k}]^c)$ for every $k\geq1$.
\end{itemize}

The random vector $W(\o)$ is, for each $\o$ in $\O$, the array announced in the above outline of the proof. It contains all the essential elements to characterize the convergence in distribution of the sum $\sum_{j=1}^{\tnu(\o)}\tbe_{j,\tnu(\o)}(\o)\zeta_j$, when $\zeta_1,\zeta_2,...$ are i.i.d. random numbers with common p.d. $\mu^*_0$. The range of $W$ is a subset of
\[
S:=\overline{\N}\times [0,2\pi]^\infty\times \overline{\N}^\infty\times [0,1]^\infty\times (\CP(\overline{\R}))^\infty\times \CP(\overline{\R})\times [0,1]^\infty
\]
where, for any metric space $M$, $\CP(M)$ has to be understood as set of all p.d.'s on $\CB(M)$ endowed with the topology of weak convergence. After metrizing $\CP(\overline{\R})$ consistently with such a topology, $\CP(\overline{\R})$ turns out to be a separable, compact and complete metric space. Therefore, $S$ results in a separable, compact and complete metric space. For an explanation of these facts, see Theorems 6.2, 6.4 and 6.5 in Chapter 2 of \cite{Parthasarathy}. It follows that $\{\CP_tW^{-1}:t\geq0\}$ is a tight family of p.d.'s on $\CB(S)$. Thus, any sequence $\big(\CP_{t_n}W^{-1}\big)_{n\geq1}$, with $t_1<t_2<\dots$ and $t_n\nearrow+\infty$, contains a subsequence $Q_n:=\CP_{t_{m_n}}W^{-1}$, $n=1,2,\dots$, which converges weakly to a probability measure $Q$. At this stage, a straightforward adaptation of Lemma 3 of \cite{GabettaRegazzini2008} implies that $Q$ is supported by
\[
\{+\infty\}\times[0,2\pi]^\infty\times\overline{\N}^\infty\times [0,1]^\infty\times\{\delta_0\}^\infty\times\CP(\R)\times\{0\}^\infty.
\]
Here, we do not reproduce the proof of this fact. Suffice it to recall that the aforesaid lemma relies on the assumption of weak convergence of $\mu(\cdot,t)$ as $t\nearrow+\infty$.\newline
\newline
\textbf{Step 2} Since $S$ is a Polish space, the Skorokhod representation theorem can be applied to state the existence of some probability space  $(\cO,\cF,\cP)$ and of random elements on it, with values in $S$, say\newline
\[
\begin{array}{ll}
&\cW_n:=(\cnun,\cten,\cin,\cben,\clmn,\cLmn,\cun)\\
&\cW:=(+\infty,\ctei,\cii,\cbei,(\delta_0,\delta_0,\dots),\cLmi,(0,0,\dots))
\end{array}
\]
with respective p.d.'s $Q_n$ and $Q$, for $n=1,2,\dots$. Moreover, $\cW_n(\o)\rightarrow\cW(\o)$, with respect to the metric of $S$, for every $\o$ in $\cO$. Hence,

\begin{equation}\label{convergenze}
\cnun\rightarrow+\infty,\quad\cun\rightarrow(0,0,\dots),\quad\clmn_j\Rightarrow\delta_0\quad\text{(for every $j$)},\quad\cLmn\Rightarrow\cLmi
\end{equation}
holds true as $n\rightarrow+\infty$. The distributional properties of $\cW_n$ imply that the equalities $\cLmn=\clmn_1\ast\clmn_2\ast\dots$ and $\cun_k=\max_{1\leq j\leq\cnun}\clmn_j([-\frac{1}{k},\frac{1}{k}]^c)$ hold almost surely. So, with the exception of a set of points $\o$ of $\cP$-probability $0$, the problem of the weak convergence of $\cLmn$ $-$ which is equivalent to the problem of pointwise convergence of $\prod_{k=1}^{\cnun}\vphi^*_0(|\cben_{k,\cnun(\o)}(\o)|\xi)$ $-$ is reduced, in this way, to a central limit problem. See, e.g., Chapter 16 of \cite{FristedtGray}. In particular, in view of $(\ref{convergenze})$, the central limit theorem entails the existence of a L\'evy measure $\mu$, which depends on $\o$, such that

\begin{equation}\label{levy}
\mu([x,+\infty))=\mu((-\infty,-x])=\lim_{n\rightarrow+\infty}\sum_{j=1}^{\cnun}\clmn_j([x,+\infty))
\end{equation}
holds for every $x>0$. Moreover, since there is a set of $\cP$-probability $1$ on which $|\cben_j|$ are strictly positive, then

\[
\clmn_j([x,+\infty))=1-F^*_0\Big(\frac{x}{|\cben_{j,\cnun}|}\Big)\qquad (x>0)
\]
on such set. It is now an easy task to show that $\liminf_{x\rightarrow+\infty}x^\alpha(1-F^*_0(x))<+\infty$ as already recalled apropos of Theorem 2.1 in \cite{BassettiLadelliRegazzini}. This fact, which prevents the function $\rho\colon(0,+\infty)\rightarrow\R^+$ defined by

\[
\rho(x):=x^\alpha(1-F^*_0(x))
\]
from converging to $+\infty$ as $x\rightarrow+\infty$, has a further deeper consequence, i.e.

\begin{equation}\label{limsup}
\limsup_{x\rightarrow+\infty}\rho(x)<+\infty.
\end{equation}
This is tantamount to saying that there are strictly positive constants $c$ and $A$ for which

\begin{equation}\label{condlimsup}
1-F_0(x)\leq\frac{c}{x^\alpha}\quad\text{and}\quad F_0(-x)\leq\frac{c}{x^\alpha}
\end{equation}
hold for every $x\geq A$. Indeed, if $(\ref{limsup})$ or, equivalently, $(\ref{condlimsup})$ does not hold true, then there is a strictly increasing and positive sequence $(x_n)_{n\geq1}$ such that $x_n\nearrow+\infty$ and $\rho(x_n)\rightarrow+\infty$, as $n\rightarrow+\infty$. Hence, for every subsequence $(x_{n_k})_{k\geq1}$ and sequence $(\beta_k)_{k\geq1}$ in $(0,1)$, the combinations $\overline{x}_{\beta_k}:=\beta_k x_{n_k}+(1-\beta_k)x_{n_{k+1}}$ satisfy

\[
1-F^*_0(x_{n_k})\geq 1-F^*_0(\overline{x}_{\beta_k})\geq 1-F^*_0(x_{n_{k+1}})
\]
and this entails

\[
\rho(\overline{x}_{\beta_k})\geq(1-\beta_k)^\alpha\rho(x_{n_{k+1}})
\]
which in turn implies that $\lim_{k\rightarrow+\infty}\rho(\overline{x}_{\beta_k})=+\infty$ whenever $\liminf_{k\rightarrow+\infty}(1-\beta_k)>0$. This conclusion, in view of the arbitrariness of the choice of $(x_{n_k})_{k\geq1}$ and $(\beta_k)_{k\geq1}$, is in contradiction with the existence of a finite lower limit of $\rho(x)$ as $x\rightarrow+\infty$.

We continue with the second step by presenting complete definitions of $\cO,\cF$ and of the random elements $\cW_n$ and $\cW$. This presentation is useful since, on the one hand, it will be actually used in the sequel and, on the other hand, it slightly deviates from the standard exposition provided, for example, in \cite{Billingsley} or \cite{Dudley}. Motivation for this deviation will become transparent at the end of this step. We start the exposition by noting that, in view of the separability of $S$, for every $m$ and $\veps_m=\frac{1}{2^{m+1}}$ there is a partition $\{B^m_0,B^m_1,\dots,B^m_{k_m}\}$ of $S$ satisfying the conditions:

\begin{itemize}
\item [] $B^m_1,\dots,B^m_{k_m}$ are open;
\item [] $Q(B^m_0)<\veps_m$;
\item [] $Q(\partial B^m_i)=0\quad(i=0,\dots,k_m),\quad diamB^m_i<\veps_m\quad(i=1,\dots,k_m)$.
\end{itemize}
Partitions are defined in such a way that the ($m+1$)-th partition is a refinement of the $m$-th. Moreover, all the $B^m_i$ for which $Q_n(B^m_i)=0$ infinitely often [and, then, $Q(B^m_i)=0$] will be amalgamated with $B^m_0$. [In the standard presentations, \textit{all} the $B^m_i$ for which $Q(B^m_i)=0$ are amalgamated with $B^m_0$.] Choose the strictly positive integer $n_m$ in such a way that

\begin{equation}
\begin{array}{ll}
& Q_n(B^m_0)\geq (1-\veps_m)Q(B^m_0)\\
& Q_n(B^m_i)> (1-\veps_m)Q(B^m_i)
\end{array}
\end{equation}
hold for every $i=1,\dots,k_m$ and every $n\geq n_m$ for $m=1,2,\dots$. We can assume $n_1<n_2<\dots$. Now, we form the product space

\[
\cO:=S_1\times S_2\times S_2\times\dots\times\CM_1\times\CM_2\times\dots\times S_3\times S_3\times\dots\times[0,1]
\]
where:

\begin{itemize}
\item $S_1=S_2=S_3=S$,
\item For each $j$, $\CM_j$ stands for the set of all $(n_{j+1}-n_j)\times(k_j+1)$ matrices, whose elements take values in suitable Borel subsets of $S$ according to the forthcoming descriptions of the $Q_n(\cdot|B^j_i)$'s and of the $\cY^j_{n+n_j-1,i-1}$'s.
\end{itemize}
Let

\[
\cW,\cY_1,\dots,\cY_{n_1-1},\cM_1,\cM_2,\dots,\cZ_1,\cZ_2,\dots,\cz
\]
be the coordinate variables of $\cO$, and let $\cP$ be a probability measure on $\CB(\cO)$ which makes these coordinates stochastically independent with marginal laws ($\CL(\cdot)$, for short) satisfying

\begin{equation*}
\begin{array}{ll}
& \CL(\cW)=Q,\qquad\CL(\cY_j)=Q_j\quad(j=1,\dots,n_1-1),\\
& \CL(\cM_j)=\otimes_{n=n_j}^{n_{j+1}-1}\otimes_{i=0}^{k_j}Q_n(\cdot\big|B^j_i)\quad(j=1,2,\dots),\\
& \CL(\cz)=\text{uniform p.d. on $[0,1]$.}
\end{array}
\end{equation*}
Each $Q_n(\cdot\big|B^j_i)$ must be understood as a p.d. on the restriction of $\CB(S)$ to $B^j_i$. It is uniquely specified for every $i=1,\dots,k_j$ whenever $n\geq n_j$. For $i=0$, $Q_n(B^j_0)$ could be $0$ even if $n\geq n_j$; in such case define $Q_n(\cdot\big|B^j_0)$ to be any p.d. on $(S,\CB(S))$. We are now in a position to define $\CL(\cZ_n)$ as

\[
\CL(\cZ_n)=\mu_n(\cdot):=\frac{1}{\veps_m}\sum_{i=0}^{k_m}Q_n(\cdot\big|B^m_i)[Q_n(B^m_i)-(1-\veps_m)Q(B^m_i)].
\]
As to the matrix $\cM_j$, denote its ($n,i$)-th entry by $\cY^j_{n+n_j-1,i-1}$ with $j=1,2,\dots$, $i=1,\dots,k_j+1$, $n=1,\dots,n_{j+1}-n_j$ and assume that the range of $\cY^j_{n+n_j-1,i-1}$ is $B^j_i$. From the definition of $\CL(\cM_j)$, one has $\CL(\cY^j_{n,i})=Q_n(\cdot\big|B^j_i)$ for $j\in\N$, $n_j\leq n\leq n_{j+1}-1$, $i=0,\dots,k_j$. So, if one introduces the random elements

\[
\cW_n:=\cY_n\qquad\text{for $n<n_1$}
\]
and, for $n_m\leq n<n_{m+1}$ and $m=1,2,\dots$,

\[
\cW_n:=\sum_{i=0}^{k_m}\I_{\{\cz\leq1-\veps_m, \cW\in B^m_i\}}\cY^m_{n,i}+\I_{\{\cz>1-\veps_m\}}\cZ_n,
\]
it is easy to show that $\CL(\cW_n)=Q_n$ for every $n$. Note that $\I_A$ denotes the indicator of the set $A$. Moreover, as to the event $E:=\bigcup_{M\geq1}\bigcap_{m\geq M}E_m$ with

\[
E_m:=\{\cW\notin B^m_0,\cz\leq 1-\veps_m\}\qquad(m=1,2,\dots)
\]
one gets $\cP(E)=1$ and, for each of its points $\co$, there is $\mbar=\mbar(\co)$ such that, for $m\geq\mbar$,

\begin{itemize}
\item [(i)] $\cW(\co)\notin B^m_0$;
\item [(ii)] $\cz(\co)\leq1-\veps_m$;
\item [(iii)] $dist(\cW_n(\co),\cW(\co))\leq\veps_m$ for $n=n_m,\dots,n_{m+1}-1$.
\end{itemize}
It follows of course that $dist(\cW_n,\cW)\rightarrow0$ almost surely ($\cP$).

As an application of the above construction we discuss the behaviour of particular subsequences of $(\cW_n)_{n\geq1}$. Let $((b_{l,r'})^{r'}_{l=1})_{r'}$ and $((b_{l,r''})^{r''}_{l=1})_{r''}$ be subsequences of a sequence $((b_{l,r})^{r}_{l=1})_{r}$ satisfying:

\begin{itemize}
\item [(I)] For every $r'$ and $r''$, $(b_{l,r'})^{r'}_{l=1}$ and $(b_{l,r''})^{r''}_{l=1}$ belong to the support of $(\cbe_{k,\cnu})^{\cnu}_{k=1}$.
\item [(II)] $\sum_{l=1}^{r'}b_{l,r'}\zeta_l$ and $\sum_{l=1}^{r''}b_{l,r''}\zeta_l$ converge in law as $r'$ and $r''$ go to infinity, when the $\zeta_l$'s are i.i.d. random numbers with common p.d. $\mu^*_0$; let $\lm'$ and $\lm''$ denote the respective limits.
\end{itemize}
Define $\cW'$ and $\cW''$ to be realizations of $\cW$ with $\lm'$ and $\lm''$ as penultimate coordinate, respectively. In view of (I) combined with the indicated changes in the Skorokhod construction, partitions can be defined so that both $\cW'$ and $\cW''$ belong to $(B^m_0)^c$ for every $m$. Then, if $\lm'\neq\lm''$, $\cW'$ and $\cW''$ belong to distinct elements of the $m$-th partition for all sufficiently large $m$; for small values of $\eta>0$, the balls $B(\cW',\eta)$ and $B(\cW'',\eta)$ with radius $\eta$ and centers $\cW'$, $\cW''$, respectively, turn out to be disjoint. Now, for any $\cw$ which belongs to $(B^m_0)^c$ for every $m$, consider the following closed subset of $\cO$

\begin{eqnarray*}
F_\eta:=\Big(\overline{B(\cw,\eta)}\Big)&\times&\Big(\overline{B(\cW',\eta)}\Big)^{n_1-1}\times\Big(\overline{B(\cW'',\eta)}\Big)^{(n_2-n_1)(1+k_1)}\times\\
&\times&\Big(\overline{B(\cW',\eta)}\Big)^{(n_3-n_2)(1+k_2)}\times\dots\times[0,\eta]
\end{eqnarray*}
where $\overline{A}$ denotes the closure of the set $A$. Given any strictly positive sequence $(\eta_q)_{q\geq1}$ such that $\eta_q\searrow0$, $\bigcap_{q\geq1}F_{\eta_q}$ contains exactly one point (from the completeness of $\cO$ and the Cantor intersection theorem), say $\co$. This point belongs to $E$,

\[
\cW_n(\co)=\sum_{i=1}^{k_m}\I_{\{\cW\in B^m_i\}}\cY^m_{n,i}(\co)\qquad(n\geq n_m, m\geq M(\co)).
\]
Hence, from (iii), $\cW_n(\co)$ belongs to an arbitrary neighborhood of $\cW(\co)$ except for a finite set of $n$. On the other hand, the definition of $\co$ combined with the above representation of $\cW_n(\co)$, leads to state the existence of two subsequences of $(\cW_n(\co))_{n\geq1}$ which, except for a finite set of $n$, belong to two disjoint neighborhoods of $\cW'$ and $\cW''$, respectively. Since this contradicts the convergence of $(\cW_n(\co))_{n\geq1}$, one concludes that $\lm'=\lm''$.\newline
\newline
\textbf{Step 3} In this step we exhibit a remarkable example of subsequences satisfying conditions (I)-(II) in Step 2, that will be used to prove the necessity of $(\ref{cond})$. Consider the sequence $\big(\na\sum_{l=1}^n\zeta_l\big)_{n\geq1}$ where the $\zeta_l$'s are the same as in Step 2. We concentrate our attention on the subsequence $\big(\frac{1}{(2n)^\frac{1}{\alpha}}\sum_{l=1}^{2n}\zeta_l\big)_{n\geq1}$ since, on the one hand, it converges if and only if the entire sequence converges and, on the other hand, it allows a simpler check of the above condition (I). Assume that $(r')$ and $(r'')$ are subsequences of $(2n)_{n\geq1}$ such that $\big(\frac{1}{(r')^\frac{1}{\alpha}}\sum_{l=1}^{r'}\zeta_l\big)_{r'}$ and $\big(\frac{1}{(r'')^\frac{1}{\alpha}}\sum_{l=1}^{r''}\zeta_l\big)_{r''}$ converge in distribution. Here, we show that these subsequences satisfy condition (I). Too see this, consider any $r'=2^m+2k$ and $(\ci_1,\dots,\ci_{r'-1})$ for $m$ in $\{1,2,\dots\}$ and $k$ in $\{0,\dots,2^{m-1}-1\}$. Note that with each realization of $(\ci_1,\dots,\ci_{r'-1})$ one can associate a binary (McKean) tree with $2^m+2k$ leaves. Leaves are ordered  from left to right and labelled by integers $1,2,\dots,2^m+2k$. Now, contemplate only the symmetric tree $\textbf{t}_s$ obtained from the complete tree $\textbf{t}_c$ with $2^m$ leaves $-$ in which all the leaves have the same \textit{depth} $m$ $-$ by attaching $2k$ copies of the unique tree with two leaves to the leaves of $\textbf{t}_c$ numbered by $1,2^m,2,2^m-1,\dots,k,2^m-(k-1)$. The probability of obtaining such a tree $\textbf{t}_s$ with $\cnu=r'$ is plainly strictly positive, and it is also easy to check that the support of the p.d. of the corresponding vectors $(\cbe_{1,r'},\dots,\cbe_{r',r'})$ includes $(\frac{1}{(r')^\frac{1}{\alpha}},\dots,\frac{1}{(r')^\frac{1}{\alpha}})$. This last statement follows from the continuity of the laws of $c_p(\cte_i)$ and $s_p(\cte_i)$ combined with the independence of the $\cte_i$'s. An analogous conclusion holds of course for $r''$. Then, condition (I) in Step 2 is verified by each subsequence $\big(\frac{1}{r^\frac{1}{\alpha}}\sum_{l=1}^{r}\zeta_l\big)_{r}$ when $r=r',r''$. To get a complete connection between these subsequences and those considered at the end of Step 2, it remains to prove that $(r')$ and $(r'')$ can be chosen in such a way that the corresponding subsequences of $\big(\frac{1}{(2n)^\frac{1}{\alpha}}\sum_{l=1}^{2n}\zeta_l\big)_{n\geq1}$ converge in distribution. We shall prove this fact by showing that the laws of the sums $\na\sum_{l=1}^n\zeta_l$ form a tight family. At that stage, Step 2 can be applied to obtain invariability of the limits which, combined with tightness, yields convergence in law of the entire sequence $\big(\frac{1}{(2n)^\frac{1}{\alpha}}\sum_{l=1}^{2n}\zeta_l\big)_{n\geq1}$.\newline
\newline
\textbf{Step 4} The proof of tightness relies on the well-known inequality

\begin{equation}\label{doob}
\CP_t\{|S_{\alpha,n}|>C\}\leq\frac{1}{\Delta}\Big(1+\frac{2\pi}{C\Delta}\Big)^2\int_0^\Delta\Big[1-\vphi^*_0\Big(\frac{\xi}{n^\frac{1}{\alpha}}\Big)^n\Big]d\xi
\end{equation}
where

\[
S_{\alpha,n}:=\na\sum_{i=1}^nX_i\qquad(n=1,2,\dots)
\]
and the $X_i$'s, according to Section 2, are i.i.d. random numbers under $\CP_t$, with common p.d. $\mu^*_0$. See Section 2 for symbols and Subsection 8.3 of \cite{ChowTeicher} for the Doob inequality $(\ref{doob})$. Now, if

\begin{equation}\label{limiti}
0\leq 1-\vphi^*_0\Big(\frac{\xi}{n^\frac{1}{\alpha}}\Big)\leq\frac{3}{8},
\end{equation}
then there is $\theta$ such that $0\leq|\theta|\leq1$ and

\[
n\log\vphi^*_0\Big(\frac{\xi}{n^\frac{1}{\alpha}}\Big)=-n\Big\{1-\vphi^*_0\Big(\frac{\xi}{n^\frac{1}{\alpha}}\Big)\Big\}+\frac{4\theta}{5}n\Big\{1-\vphi^*_0\Big(\frac{\xi}{n^\frac{1}{\alpha}}\Big)\Big\}^2.
\]
On the other hand, there is $\Delta_0>0$ so that $(\ref{limiti})$ holds true if $|\xi|\leq\Delta n^\frac{1}{\alpha}$ for any $\Delta$ in $(0,\Delta_0]$ and, therefore,

\begin{eqnarray*}
1-\vphi^*_0\Big(\frac{\xi}{n^\frac{1}{\alpha}}\Big)^n&=& 1-\exp\Big\{-n\Big[1-\vphi^*_0\Big(\frac{\xi}{n^\frac{1}{\alpha}}\Big)\Big]+\frac{4\theta}{5}n\Big[1-\vphi^*_0\Big(\frac{\xi}{n^\frac{1}{\alpha}}\Big)\Big]^2\Big\}\\
&\leq& 1-\exp\Big\{-n\Big[1-\vphi^*_0\Big(\frac{\xi}{n^\frac{1}{\alpha}}\Big)\Big]-\frac{3}{10}n\Big[1-\vphi^*_0\Big(\frac{\xi}{n^\frac{1}{\alpha}}\Big)\Big]\Big\}\\
&\leq& 1-\exp\Big\{-\frac{13}{10}n\Big[1-\vphi^*_0\Big(\frac{\xi}{n^\frac{1}{\alpha}}\Big)\Big]\Big\}\\
&\leq& \frac{13}{10}n\Big[1-\vphi^*_0\Big(\frac{\xi}{n^\frac{1}{\alpha}}\Big)\Big].
\end{eqnarray*}
Hence, from $(\ref{doob})$,

\[
\CP_t\{|S_{\alpha,n}|>C\}\leq\frac{1}{\Delta}\Big(1+\frac{2\pi}{C\Delta}\Big)^2\frac{13}{10}n\int_0^\Delta\Big[1-\vphi^*_0\Big(\frac{\xi}{n^\frac{1}{\alpha}}\Big)\Big]d\xi.
\]
As to the last integral,

\begin{eqnarray*}
\int_0^\Delta\Big[1-\vphi^*_0\Big(\frac{\xi}{n^\frac{1}{\alpha}}\Big)\Big]d\xi &=& 2\int_0^\Delta\Big(\int_0^{+\infty}\Big[1-\cos\frac{\xi x}{n^\frac{1}{\alpha}}\Big]dF^*_0(x)\Big)d\xi\\
&=& 2\lim_{\O\rightarrow+\infty}\int_{0}^{\O}\Big[\Delta-\frac{n^\frac{1}{\alpha}}{x}\sin\frac{\Delta x}{n^\frac{1}{\alpha}}\Big]dF^*_0(x)\\
&=& 2 n^\frac{1}{\alpha}\int_0^{+\infty}[1-F^*_0(x)]\frac{1}{x^2}\Big[\sin\frac{\Delta x}{n^\frac{1}{\alpha}}-\frac{\Delta x}{n^\frac{1}{\alpha}}\cos\frac{\Delta x}{n^\frac{1}{\alpha}}\Big]dx.
\end{eqnarray*}
Hence, putting

\[
\tau(x):=\sin x-x\cos x\qquad(x>0)
\]
and recalling the definition of $\rho$, one has

\begin{eqnarray*}
&&2 n^\frac{1}{\alpha}\int_0^{+\infty}[1-F^*_0(x)]\frac{1}{x^2}\Big[\sin\frac{\Delta x}{n^\frac{1}{\alpha}}-\frac{\Delta x}{n^\frac{1}{\alpha}}\cos\frac{\Delta x}{n^\frac{1}{\alpha}}\Big]dx=\\
&&\qquad\qquad\qquad\qquad\qquad\qquad=2\Delta^{1+\alpha}\frac{1}{n}\int_{0}^{+\infty}\frac{1}{y^{2+\alpha}}\rho\Big(\frac{yn^\frac{1}{\alpha}}{\Delta}\Big)\tau(y)dy.
\end{eqnarray*}
Then, setting $M:=\sup_{x\geq0}\rho(x)$,

\begin{eqnarray*}
\CP_t\{|S_{\alpha,n}|>C\}&\leq&\frac{13}{5}\Big(1+\frac{2\pi}{C\Delta}\Big)^2M\Delta^\alpha\int_{0}^{+\infty}\frac{1}{y^{2+\alpha}}\Big[\frac{y^3}{6}\I_{(0,\veps)}(y)+\\
&&\qquad\qquad\qquad\qquad\qquad+(1+y)\I_{(\veps,+\infty)}(y)\Big]dy\\
&\leq& A\Delta^\alpha\Big(1+\frac{2\pi}{C\Delta}\Big)^2
\end{eqnarray*}
obtains for some suitable constant $A$. So, taking $C=\Delta^{-1}\geq\Big(\frac{A(1+2\pi)^2}{\veps}\Big)^\frac{1}{\alpha}$, one gets

\[
\CP_t\{|S_{\alpha,n}|>C\}\leq\veps\qquad(t>0)
\]
that entails the tightness of the laws of the sums $\na\sum_{i=1}^nX_i$. Then, in view of the final remark in Step 3, we conclude that these sums converge in distribution. This implies $-$ through the fundamental theorem on the standard domain of attraction (see, e.g. the final part of Section 2.6 of \cite{IbragimovLinnik}) $-$ that $F^*_0$ must satisfy $(\ref{cond})$. Hence, from Theorem 2.1 in \cite{BassettiLadelliRegazzini},

\[
\lim_{t\rightarrow+\infty}\vphi(\xi,t)=\lim_{t\rightarrow+\infty}\vphi^*(\xi,t)=\exp\{-a_0|\xi|^\alpha\}
\]
for every $\xi$. This terminates the proof.


%
%

\begin{acknowledgements}
We would like to thank Eleonora Perversi for very helpful comments. The research of Ester Gabetta has been partially supported by MIUR-2009 NAPJF-002, that of Eugenio Regazzini by MIUR-2008 MK3AFZ.
\end{acknowledgements}



\end{document}